\def\@secnumfont{\bfseries\scshape}
\def\section{\@startsection{section}{1}%
  \z@{.7\linespacing\@plus\linespacing}{.5\linespacing}%
  {\normalfont\large\bfseries\scshape\centering}}
\def\subsection{\@startsection{subsection}{2}%
  \z@{.5\linespacing\@plus.7\linespacing}{-.5em}%
  {\normalfont\bfseries\scshape}}
\def\subsubsection{\@startsection{subsubsection}{3}%
  \z@{.5\linespacing\@plus.7\linespacing}{-.5em}%
  {\normalfont\scshape}}
\def\specialsection{\@startsection{section}{1}%
  \z@{\linespacing\@plus\linespacing}{.5\linespacing}%
  {\normalfont\centering\large\bfseries\scshape}}
\renewenvironment{proof}[1][\proofname]{\par
\pushQED{\qed}%
\normalfont \topsep4\p@\@plus4\p@\relax
\trivlist
\item[\hskip\labelsep
\bfseries
#1\@addpunct{.}]\ignorespaces
}{%
\popQED\endtrivlist\@endpefalse
}
\newcommand \Dotfill {\leavevmode \leaders \hb@xt@ 6pt{\hss .\hss }\hfill \kern \z@}
\def\@tocline#1#2#3#4#5#6#7{\relax
  \ifnum #1>\c@tocdepth 
  \else
    \par \addpenalty\@secpenalty\addvspace{#2}%
    \begingroup \hyphenpenalty\@M
    \@ifempty{#4}{%
      \@tempdima\csname r@tocindent\number#1\endcsname\relax
    }{%
      \@tempdima#4\relax
    }%
    \parindent\z@ \leftskip#3\relax \advance\leftskip\@tempdima\relax
    \rightskip\@pnumwidth plus4em \parfillskip-\@pnumwidth
    #5\leavevmode\hskip-\@tempdima
      \ifcase #1
       \or\or \hskip 1.65em \or \hskip 3.3em \else \hskip 4.95em \fi%
      #6\nobreak\relax
    \Dotfill
    \hbox to\@pnumwidth{\@tocpagenum{#7}}\par
    \nobreak
    \endgroup
  \fi}
\def\l@section{\@tocline{1}{0pt}{1pc}{}{\scshape}}
\renewcommand{\tocsection}[3]{%
\indentlabel{\@ifnotempty{#2}{\ignorespaces#1 #2.\hskip 0.7em}}#3}
\def\l@subsection{\@tocline{2}{0pt}{1pc}{5pc}{}}
\def\l@subsubsection{\@tocline{3}{0pt}{1pc}{7pc}{}}
\numberwithin{equation}{section}
\newtheoremstyle{mytheorem}{.7\linespacing\@plus.3\linespacing}{.7\linespacing\@plus.3\linespacing}%
     {\itshape}
     {}
     {\bfseries}
     {. }
     {0.3ex}
     {\thmname{{\bfseries #1}}\thmnumber{ {\bfseries #2}}\thmnote{ (#3)}}  
\theoremstyle{mytheorem}
\newtheorem{theorem}{Theorem}[section]
\newtheorem{lemma}[theorem]{Lemma}
\newtheorem{corollary}[theorem]{Corollary}
\newtheorem{remark}[theorem]{Remark}
\newtheorem{definition}[theorem]{Definition}
\renewcommand{\tilde}{\widetilde}          
\DeclareMathSymbol{\leqslant}{\mathalpha}{AMSa}{"36} 
\DeclareMathSymbol{\geqslant}{\mathalpha}{AMSa}{"3E} 
\DeclareMathSymbol{\eset}{\mathalpha}{AMSb}{"3F}     
\newcommand\blue{{}}
\newcommand{\be}{\begin{equation}}
\newcommand{\ee}{\end{equation}}
\newcommand{\R}{\mathbb{R}}
\newcommand{\Z}{\mathbb{Z}}
\newcommand{\N}{\mathbb{N}}
\newcommand{\PEfont}{\mathrm}
\newcommand{\p}{\ensuremath{\PEfont P}}
\renewcommand{\P}{\p}
\renewcommand{\epsilon}{\varepsilon}
\renewcommand{\rho}{\varrho}
\newenvironment{myenumerate}{%
\renewcommand{\theenumi}{\arabic{enumi}}%
\renewcommand{\labelenumi}{{\rm(\theenumi)}}%
\begin{list}{\labelenumi}
	{%
	\setlength{\itemsep}{0.4em}%
	\setlength{\topsep}{0.5em}%
	\setlength\leftmargin{2.45em}%
	\setlength\labelwidth{2.05em}%
	\setlength{\labelsep}{0.4em}%
	\usecounter{enumi}%
	}%
	}%
{\end{list}
}
\newenvironment{myitemize}{%
\begin{list}{$\bullet$}%
 	{%
	\setlength{\itemsep}{0.4em}%
	\setlength{\topsep}{0.5em}%
	\setlength\leftmargin{2.65em}%
	\setlength\labelwidth{2.65em}%
	\setlength{\labelsep}{0.4em}%
	}%
	}%
{\end{list}}
\renewenvironment{itemize}{
\begin{myitemize}}%
{\end{myitemize}}
\date{\today}
\title[A New Correlation Inequality for Ising Models]{A New Correlation Inequality for Ising models with external fields}
\author[Jian Ding]{Jian Ding}
\address{School of Mathematical Sciences\\ 
Peking University}
\email{dingjian@math.pku.edu.cn}
\author[Jian Song]{Jian Song}
\address{Research Center for     Mathematics and Interdisciplinary Sciences, Shandong University; 
 School of Mathematics, Shandong University}
\email{txjsong@sdu.edu.cn}
\author[R. Sun]{Rongfeng Sun}
\address{Department of Mathematics\\
National University of Singapore}
\email{matsr@nus.edu.sg}
\begin{document}

\begin{abstract}
We study ferromagnetic Ising models on finite graphs with an inhomogeneous external field, where a subset of vertices is designated as the boundary. We show that the influence of boundary conditions on any given spin is maximised when the external field is identically $0$. One corollary is that spin-spin correlations are maximised when the external field vanishes and the boundary condition is free, which proves a conjecture of Shlosman. In particular, the random field Ising model on $\Z^d$, $d\geq 3$, exhibits exponential decay of correlations in the entire high temperature regime of the pure Ising model. Another corollary is that the pure Ising model in $d\geq 3$ satisfies the conjectured strong spatial mixing property in the entire high temperature regime.
\end{abstract}

\keywords{Ising model, random field Ising model, boundary influence}
\subjclass[2010]{Primary: 82B20; Secondary: 60K35, 60K37, 82B44}

\maketitle


\section{Introduction}

The Ising model was introduced by Lenz \cite{L20} as a model for magnets, which became the most important statistical physics model in the study of phase transitions. Correlation inequalities are powerful tools in the study of the Ising model, although they are hard to come by. The most important inequalities include the FKG inequality~\cite{FKG71} which establishes positive correlation between increasing functions of the spin configuration, the GKS inequalities~\cite{G67, KS68} which concern the expected value of the product of spins; the GHS inequality~\cite{GHS70} which gives the concavity of the expected spin value in a positive external field. More background can be found in books such as \cite{FV17, B06}. In this paper, we prove a new correlation inequality for the Ising model, which allows us to compare an Ising model with arbitrary external fields to one without an external field, and compare arbitrary boundary conditions with the free boundary condition. Heuristically, the presence of external fields, regardless of their sign, weakens the correlation between spins as well as the influence of boundary conditions on interior spins. We will turn this heuristic into a rigorous statement.

We believe our correlation inequality provides a powerful new tool for the study of the Ising model. As corollaries, we show that the random field Ising model exhibits exponential decay of correlations whenever the temperature is above the critical temperature of the pure Ising model. We also prove the long standing conjecture that the Ising model on $\Z^d$, $d\geq 3$, satisfies the strong spatial mixing property whenever the temperature is above the critical value. We will discuss more details later. First we introduce the basic setup.

Let $G=(V, E)$ be a finite graph with vertex set $V$ and edge set $E$. If $u,v\in V$ are connected by an edge in $E$, we denote it by $u\sim v$. Given coupling constants $J: E\to [0,\infty)$, inverse temperature $\beta \in (0,\infty)$, and external field $g: V \to [-\infty, \infty]$, we define the Ising model on spin configurations $\sigma \in \{\pm 1\}^V$ via the Gibbs measure
\begin{equation}
\mu_g(\sigma) = \frac{1}{Z_g} \exp\Big\{\beta\sum_{u\sim v} J_{uv} \sigma_u \sigma_v + \sum_{u\in V} g_u \sigma_u\Big\},
\end{equation}
where $Z_g$ is the normalizing constant known as the partition function. Since $\beta$ can be absorbed by the coupling constants $J$, we will assume $\beta=1$ unless specified otherwise. For more background on the Ising model, see e.g.~\cite{B06, FV17}. We will denote expectation with respect to $\mu_g$ by $\langle \cdot \rangle_g$. Note that  when $g_u=\pm \infty$ for some $u\in V$, we must have $\sigma_u=\pm1$, which effectively imposes a boundary condition at $u$ (this boundary condition has the same effect as removing $u$ from the graph and adding an extra field $\pm J_{uv}$ to each neighbour $v\sim u$ if $g_u=\pm \infty$). To impose $+$ or $-$ boundary conditions on a subset of vertices $V'\subset V$, we can start with $g:V\to\R$, set $h\equiv \infty$ on $V'$ and $h\equiv 0$ on $V\backslash V'$, and then replace $g$ by $g^+_{V'}:=g+h$ for $+$ boundary condition on $V'$, or $g^-_{V'}:=g-h$ for $-$ boundary condition on $V'$. We can then define the influence of the boundary spins $(\sigma_u)_{u\in V'}$ on a given vertex $o\in V\backslash V'$ by
$$
\langle \sigma_o\rangle_{g^+_{V'}} - \langle \sigma_o\rangle_{g^-_{V'}}.
$$
Our main result is that
\begin{equation}\label{eq:inf0}
\langle \sigma_o\rangle_{g^+_{V'}} - \langle \sigma_o\rangle_{g^-_{V'}} \leq \langle \sigma_o\rangle_{0^+_{V'}} - \langle \sigma_o\rangle_{0^-_{V'}}.
\end{equation}
Namely, the influence of the boundary spins $(\sigma_v)_{v\in V'}$ on $\sigma_o$ is maximized when the external fields on the interior spins $(\sigma_v)_{v\in V\backslash V'}$ are identically $0$. 

We will in fact prove the following more general result where $h$ can take arbitrary values in $[0,\infty]$, not just in $\{0, \infty\}$. 
\begin{theorem}\label{T:inf} Let $g: V\to [-\infty, \infty]$ and $h: V\to [0,\infty]$ be such that $\min\{|g_v|, h_v\}<\infty$ for all $v\in V$. Then for any $o\in V$,
\begin{equation}\label{eq:inf}
\langle \sigma_o\rangle_{g+h} - \langle \sigma_o\rangle_{g-h} \leq \langle \sigma_o\rangle_{h} - \langle \sigma_o\rangle_{-h}.
\end{equation}
\end{theorem}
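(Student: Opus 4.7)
Via the spin-flip symmetry $\sigma \mapsto -\sigma$, which gives $\langle\sigma_o\rangle_{-f} = -\langle\sigma_o\rangle_f$, the inequality is equivalent to the midpoint-concavity statement $\langle\sigma_o\rangle_{h+g} + \langle\sigma_o\rangle_{h-g} \le 2\langle\sigma_o\rangle_h$ for every $h \ge 0$ and every signed $g$. This strengthens the GHS inequality, which only gives concavity along non-negative directions $g \ge 0$ when $h \ge 0$, so a genuinely new ingredient is required.

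The cleanest case is $g = a e_{v^{*}}$ supported at a single vertex. Conditioning on $\sigma_{v^{*}}$ gives $\langle\sigma_o\rangle_{h + a e_{v^{*}}} = p(a) m_+ + (1-p(a)) m_-$, where $m_\pm = \langle\sigma_o \mid \sigma_{v^{*}} = \pm 1\rangle_h$ is independent of $a$ and $p(a) = 1/(1 + r e^{-2a})$ with $r := Z^-/Z^+$. For $h \ge 0$, GKS-II gives $\langle\sigma_{v^{*}}\rangle_h \ge 0$, hence $r \le 1$; FKG gives $m_+ \ge m_-$. A direct computation shows
\[
p(a) + p(-a) \;=\; \frac{2(1 + r \cosh 2a)}{(1 + r^2) + 2r \cosh 2a}
\]
is decreasing in $\cosh 2a$ whenever $r \le 1$, hence maximized at $a = 0$; combined with $m_+ - m_- \ge 0$, this yields the single-vertex inequality for every $a \in \R$.

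For multi-site $g$, I would attempt a two-replica representation. Let $\sigma \sim \mu_{h + g}$ and $\tau \sim \mu_{h-g}$ be independent, and set $\epsilon_v := \ind_{\{\sigma_v = \tau_v\}} \in \{0,1\}$ together with a sign $x_v \in \{\pm 1\}$ equal to $(\sigma_v + \tau_v)/2$ if $\epsilon_v = 1$ and to $(\sigma_v - \tau_v)/2$ if $\epsilon_v = 0$. A direct computation then gives
\[
P(\epsilon, x) \;\propto\; \exp\Bigl(2\!\!\sum_{u \sim v:\, \epsilon_u = \epsilon_v}\!\! J_{uv}\, x_u x_v \;+\; 2\sum_v x_v\bigl[h_v \epsilon_v + g_v(1 - \epsilon_v)\bigr]\Bigr), \qquad \sigma_o + \tau_o = 2\epsilon_o x_o.
\]
Conditionally on $\epsilon$, $x$ is a ferromagnetic Ising model on the ``iso-$\epsilon$'' graph with field $2h$ on each $\epsilon = 1$ component and $2g$ on each $\epsilon = 0$ component, so the conditional expectation of $\epsilon_o x_o$ depends on $g$ only through the marginal law of $\epsilon$, and the conditional expectation of $x_o$ on the $\epsilon = 1$-component of $o$ is a function of $h$ alone that is non-negative by GKS.

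The main obstacle lies in comparing the $\epsilon$-marginals for general $g$ versus $g \equiv 0$. They differ by the Radon--Nikodym factor $\prod_{C^-} Z_{C^-}(2g|_{C^-})/Z_{C^-}(0)$ over the $\epsilon = 0$ components, and each factor equals $\langle \cosh(2\sum_v g_v x_v)\rangle_{C^-}^{\text{zero field}} \ge 1$. Heuristically this pushes $\epsilon$ towards larger $\epsilon = 0$ regions and hence smaller $\epsilon = 1$-components of $o$, which would close the argument via the GKS/GHS monotonicity of the component expectation in $h$. However the reweighting is only globally (not edge-wise) monotone in $\epsilon$, and the target functional is not monotone in $\epsilon$ either, so a naive FKG argument fails. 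I would either develop a stochastic-domination argument adapted to the cluster structure of the $\epsilon = 1$-component of $o$, or alternatively induct on $|\{v : g_v \ne 0\}|$ using the single-vertex bound; the difficulty with the latter route is that intermediate baseline fields of the form $h \pm \bar g$ are not in general non-negative, so the crucial hypothesis $r \le 1$ may fail and a finer control of the sigmoidal weight is then needed.
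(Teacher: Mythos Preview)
Your single-vertex computation is correct, and the two-replica $(\epsilon,x)$ representation is set up correctly, with $\sigma_o+\tau_o=2\epsilon_o x_o$ and the conditional law of $x$ given $\epsilon$ factoring over iso-$\epsilon$ components. (One small correction: the conditional expectation $\epsilon_o\langle x_o\rangle_{C^+_o(\epsilon),\,2h}$ \emph{is} increasing in $\epsilon$, by GKS with non-negative field on the growing $\epsilon=1$-component of $o$.) The proof is nonetheless genuinely incomplete at exactly the point you flag. The Radon--Nikodym factor $\prod_{C^-}Z_{C^-}(2g)/Z_{C^-}(0)$ is \emph{not} a decreasing function of $\epsilon$: take a two-site component $C^-=\{u,w\}$ with large $J_{uw}$ and $g_u=-g_w\ne0$; the ratio is close to $1$, but after flipping $\epsilon_w$ to $1$ the remaining one-site factor is $\cosh(2g_u)>1$. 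So no Holley/FKG comparison of the $\epsilon$-marginals is available, and it is not clear the base measure $P_0$ is positively associated either. Your alternative induction on $|\{v:g_v\ne0\}|$ also fails for the reason you give: once part of $g$ is absorbed into the baseline, the baseline is no longer non-negative and the crucial input $r\le1$ (equivalently $\langle\sigma_{v^*}\rangle\ge0$) is lost. Neither route is pushed past these obstacles, and they are real.

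The paper's proof avoids both difficulties by inducting instead on $(|V|,|V_+|)$ with $V_+:=\{v:h_v>0\}$, i.e.\ on the support of $h$ rather than of $g$. At a vertex $v\ne o$ with $h_v>0$, write the marginal of $\sigma_v$ under $\mu_{\pm h}$ as the $\alpha$-mixture of the laws with $h_v$ reset to $0$ and to $\pm\infty$. The key lemma then asserts that for any $c_\pm\ge0$ there exists a \emph{single} $H\in\R$ with
\[
c_+\langle\sigma_v\rangle_{g+h}-c_-\langle\sigma_v\rangle_{g-h}\ \le\ c_+\bigl(\alpha\,\langle\sigma_v\rangle_{g+h}^{(H)}+1-\alpha\bigr)-c_-\bigl(\alpha\,\langle\sigma_v\rangle_{g-h}^{(H)}-(1-\alpha)\bigr),
\]
where the superscript $(H)$ means the field at $v$ is reset to $H$. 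Taking $c_\pm=\tfrac12\bigl(\langle\sigma_o\mid\sigma_v=1\rangle_{g\pm h}-\langle\sigma_o\mid\sigma_v=-1\rangle_{g\pm h}\bigr)\ge0$ and reassembling via the decomposition $\langle\sigma_o\rangle=\tfrac12(\langle\sigma_o\mid\sigma_v=1\rangle+\langle\sigma_o\mid\sigma_v=-1\rangle)+c\,\langle\sigma_v\rangle$ reduces the claim to one instance with the field at $v$ set to the common value $H$ (so $|V_+|$ drops by one) and one with $\sigma_v$ pinned to $\pm1$ (so $|V|$ drops by one). Because it is $h$ that is being peeled off, the remaining $h$ stays non-negative throughout, so the GKS-type positivity you rely on is never endangered; the freedom in the choice of $H$ is precisely what replaces your missing stochastic-domination step. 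The lemma itself is proved by an explicit computation in $\tanh$ variables, using the induction hypothesis only to bound the gap between the effective fields on $\sigma_v$ under $\mu_{g\pm h}$ with $h_v$ reset to $0$.
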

\noindent
When $V$ is a tree and $h_v=\infty$ at some $v\in V$ and $0$ everywhere else,  this result has been proved by N.~Berger, C.~Kenyon, E.~Mossel, and Y.~Peres in \cite[Lemma 4.1]{BKMY05}.

\begin{remark}\label{rem1}
\blue{It is tempting to conjecture that $\langle \sigma_o\rangle_{\lambda g+h} - \langle \sigma_o\rangle_{\lambda g-h}$ is monotonically decreasing in $\lambda \geq 0$. However, this is not true in general. We will give a counterexample in Appendix \ref{S:example}. }
\end{remark}

As a first corollary of Theorem \ref{T:inf}, we show that for the Ising model on finite graphs, spin-spin correlations are maximised when the external field vanishes.
As communicated to us by R.~van den Berg, S.~Shlosman first conjectured this inequality and proved it for certain small graphs. 
\begin{corollary}\label{C:corr}
Let $g: V\to [-\infty, \infty]$. Then for any $u, v\in V$,
\begin{equation}\label{eq:corr}
\langle \sigma_u \sigma_v\rangle_g - \langle \sigma_u\rangle_g \langle \sigma_v\rangle_g \leq \langle \sigma_u\sigma_v\rangle_0. 
\end{equation}
\end{corollary}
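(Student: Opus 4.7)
The plan is to reduce the covariance on the left-hand side to a difference of conditional expectations of $\sigma_u$ given $\sigma_v = \pm 1$, and then apply Theorem \ref{T:inf} with $h$ concentrated on $\{v\}$.

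First I would dispose of two degenerate cases: if $u=v$, then $\langle\sigma_u\sigma_v\rangle_g = 1$ and the inequality is trivial; if $g_v = \pm\infty$, then $\sigma_v$ is deterministic under $\mu_g$ so the left-hand side is $0$, and the inequality follows from $\langle\sigma_u\sigma_v\rangle_0 \geq 0$ (Griffiths). So assume $u\neq v$ and $|g_v|<\infty$.

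Next I would condition on $\sigma_v$. Writing $p = \mu_g(\sigma_v = +1)$, $q = 1-p$ and $a^{\pm} = \langle \sigma_u \mid \sigma_v = \pm 1\rangle_g$, a direct computation gives
\begin{equation*}
\langle \sigma_u\sigma_v\rangle_g - \langle \sigma_u\rangle_g\langle \sigma_v\rangle_g = 2pq\,(a^+ - a^-).
\end{equation*}
Conditioning on $\sigma_v = \pm 1$ has the same effect as sending $g_v \to \pm\infty$. So if I define $g'$ to agree with $g$ off $v$ and set $g'_v := 0$, and let $h_v := \infty$, $h_w := 0$ for $w\neq v$, then $a^\pm = \langle \sigma_u\rangle_{g' \pm h}$. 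The hypothesis $\min\{|g'_v|,h_v\}<\infty$ is satisfied at $v$ (since $g'_v = 0$) and trivially elsewhere, so Theorem \ref{T:inf} applies and yields
\begin{equation*}
a^+ - a^- = \langle \sigma_u\rangle_{g'+h} - \langle \sigma_u\rangle_{g'-h} \leq \langle \sigma_u\rangle_{h} - \langle \sigma_u\rangle_{-h}.
\end{equation*}

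The final step is to identify the right-hand side above with $2\langle\sigma_u\sigma_v\rangle_0$. By the $\Z_2$ spin-flip symmetry of $\mu_0$, $\langle \sigma_u\mid\sigma_v=+1\rangle_0 = -\langle\sigma_u\mid\sigma_v=-1\rangle_0$ and $\mu_0(\sigma_v=+1) = \tfrac12$, so the conditioning formula applied at $g \equiv 0$ gives
\begin{equation*}
\langle \sigma_u\rangle_{h} - \langle \sigma_u\rangle_{-h} = 2\langle \sigma_u \mid \sigma_v=+1\rangle_0 = 2\langle \sigma_u\sigma_v\rangle_0.
\end{equation*}
Combining with $2pq \leq \tfrac12$ and $\langle\sigma_u\sigma_v\rangle_0 \geq 0$ (Griffiths' first inequality), I get
\begin{equation*}
\langle\sigma_u\sigma_v\rangle_g - \langle\sigma_u\rangle_g\langle\sigma_v\rangle_g = 2pq(a^+-a^-) \leq 4pq\,\langle\sigma_u\sigma_v\rangle_0 \leq \langle\sigma_u\sigma_v\rangle_0,
\end{equation*}
which is the desired inequality.

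There is no real obstacle beyond setting up the reduction carefully: the heart of the argument is recognizing that a covariance under $\mu_g$ decomposes as $2pq$ times a conditional-expectation gap, which is precisely what Theorem \ref{T:inf} (applied with a point-mass $h$) controls, and that at zero field this gap becomes twice the correlation itself thanks to the $\Z_2$ symmetry. The factor $4pq \leq 1$ then absorbs the extra $2$.
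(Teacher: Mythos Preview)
Your proof is correct and matches the paper's second (``alternative'') proof essentially line for line: the same covariance identity $2pq(a^+-a^-)$, the same application of Theorem~\ref{T:inf} with $h=\infty\cdot\delta_v$, and the same comparison $pq\le 1/4$ combined with the $\Z_2$ symmetry at zero field. The paper also gives a shorter primary proof via the derivative formula $\langle\sigma_u\sigma_v\rangle_g-\langle\sigma_u\rangle_g\langle\sigma_v\rangle_g=\partial\langle\sigma_u\rangle_g/\partial g_v$, applying Theorem~\ref{T:inf} with $h=a\delta_v$ and letting $a\downarrow 0$.
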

\noindent
Note that boundary conditions can also be regarded as external fields imposed on the interior spins. Therefore Corollary \ref{C:corr} implies that spin correlations is maximised under free boundary conditions (if $V=U\cup \partial U$ with $U\cap \partial U=\emptyset$ and boundary conditions are imposed on $\partial U$, then free boundary condition for $U$ means removing $\partial U$ from the graph). 

\begin{remark}\label{rmk1.4}
\blue{It is also tempting to conjecture that the l.h.s.\ of \eqref{eq:corr} is decreasing in $\lambda\geq 0$ if $g=\lambda \tilde g$, which would imply \eqref{eq:corr}. However, this is not true in general either. We will give a counterexample in Appendix \ref{S:example}. If $\tilde g\geq 0$, then there is indeed monotonicity in $\lambda\geq 0$, which follows from the GHS inequality \cite{GHS70} since the l.h.s.\ of \eqref{eq:corr} equals $\frac{\partial^2 \log Z_g}{\partial g_u \partial g_v}$. 
}
\end{remark}

\begin{remark}\label{rmk1.5}
\blue{Theorem \ref{T:inf} and Corollary \ref{C:corr} in fact hold for general Ising models whose single spin measure, i.e., the a priori distribution of each $\sigma_u$, belongs to the so-called Griffiths-Simon class $($see \cite{G69, SG73} or \cite[Section 2]{ADC21}$)$, where the single spin measure can be approximated by that of a weighted average of a block of $\pm1$-valued Ising spins with ferromagnetic interactions. This includes in particular the $\phi^4$ model.\footnote{\blue{Recently, Corollary \ref{C:corr} has been used by Bauerschmidt and Dagallier to prove log-Sobolev inequalities for the $\phi^4_2$ and $\phi^4_3$ models \cite{BD22b} and the near-critical Ising models \cite{BD22a}.}} Such an extension holds because \eqref{eq:inf} and \eqref{eq:corr} are preserved if each $\sigma_x$ therein is replaced by a positive linear combination of Ising spins in the system.}
\end{remark}

The second corollary of Theorem \ref{T:inf} concerns the random field Ising model (RFIM), where the external field (as a function on the vertex set $V$) is given by a family of i.i.d.\ random variables $(\omega_v)_{v\in V}$ (see \cite[Chapter 7]{B06} for an overview). The most important case  is when $V=[-N, N]^d \cap \Z^d$, with boundary conditions imposed on the surface of the cube. Since phase transitions only arise in the infinite volume limit, a fundamental question is whether the influence of boundary conditions persists or vanishes as $N\to\infty$, which determines whether or not the RFIM undergoes a first order phase transition. Classic results include the work of Aizenman and Wehr~\cite{AW90}, who showed that in $d=2$, as long as there is randomness in the external field, the boundary influence always decays to $0$ as $N\to\infty$. Recently, this decay was shown to be exponential by Ding and Xia~\cite{DX21} and also Aizenman, Harel and Peled~ \cite{AHP20} for Gaussian random fields. In contrast, in $d\geq 3$, Bricmont and Kupiainen \cite{BK88} showed that when the distribution of the random field $\omega_v$ has a sufficiently fast decaying Gaussian tail, then at sufficiently low temperatures, the boundary influence does not vanish as $N\to\infty$. But if the temperature is sufficiently high, or if the random field is sufficiently large, then again the boundary influence decays exponentially fast (see \cite{IF84, B85, DKP95, CJN18}). Among these results, only \cite{CJN18} gave quantitative bounds on the parameter regime that has exponential decay. But it still falls short of covering the entire high temperature regime of the pure Ising model. This now follows as an immediate corollary of Theorem \ref{T:inf}.

\begin{corollary}\label{C:RFIM}
Let $\Lambda_N:= [-N, N]^d \cap \Z^d$ with $d\geq 2$. Let $\omega: \Z^d\to [-\infty, \infty]$ be an arbitrary external field. Let $\langle \cdot \rangle_{\Lambda_N, \beta}^{\omega, \pm}$ $($with $\omega$ omitted if $\omega\equiv 0)$ denote expectation w.r.t.\ the Ising model on $\Lambda_N$ with coupling constants $J\equiv 1$, inverse temperature $\beta$, external field $\omega$, and $+/-$ boundary condition on $\partial \Lambda_N:=\{x\in \Z^d\backslash \Lambda_N: |x-y|_1=1$ for some $y\in \Lambda_N\}$. Then for all $N\in\N$,
\begin{equation}\label{eq:RFIM}
\langle \sigma_0 \rangle_{\Lambda_N, \beta}^{\omega, +} - \langle \sigma_0 \rangle_{\Lambda_N, \beta}^{\omega, -} \leq \langle \sigma_0 \rangle_{\Lambda_N, \beta}^{+} - \langle \sigma_0 \rangle_{\Lambda_N, \beta}^{-} \leq C_1(\beta) e^{-C_2(\beta)N},
\end{equation}
where $C_2(\beta)=0$ for $\beta\geq \beta_c$ and $C_2(\beta)>0$ for $\beta <\beta_c$, with $\beta_c$ being the critical inverse temperature of the Ising model on $\Z^d$.
\end{corollary}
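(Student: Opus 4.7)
The plan is to derive the first inequality as an almost immediate consequence of Theorem \ref{T:inf} and to obtain the second inequality from the classical sharpness of the phase transition for the pure Ising model.

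For the first inequality, I would work on the graph $(\Lambda_N \cup \partial \Lambda_N, E)$ with $E$ the nearest-neighbour edges, unit coupling constants, and inverse temperature $\beta$, and then apply Theorem \ref{T:inf} with
\begin{equation*}
g_v = \omega_v\, \ind_{v \in \Lambda_N}, \qquad h_v = \infty \cdot \ind_{v \in \partial \Lambda_N}.
\end{equation*}
The compatibility condition $\min\{|g_v|, h_v\} < \infty$ holds at every vertex because $h_v = 0$ on $\Lambda_N$ and $g_v = 0$ on $\partial \Lambda_N$. With these choices, $g+h$ prescribes the external field $\omega$ inside $\Lambda_N$ together with the $+$ boundary condition on $\partial \Lambda_N$, and $g-h$ does the same with the $-$ boundary condition, while $\pm h$ alone correspond to pure $\pm$ boundary conditions with no interior field. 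Taking $o = 0$ in \eqref{eq:inf} yields the first inequality in \eqref{eq:RFIM}.

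For the second inequality, I would first exploit the spin-flip symmetry of the pure Ising model (no external field) to rewrite
\begin{equation*}
\langle \sigma_0 \rangle^{+}_{\Lambda_N, \beta} - \langle \sigma_0 \rangle^{-}_{\Lambda_N, \beta} = 2\langle \sigma_0 \rangle^{+}_{\Lambda_N, \beta},
\end{equation*}
and then appeal to the Edwards--Sokal coupling, under which $\langle \sigma_0 \rangle^{+}_{\Lambda_N, \beta}$ equals the probability that $0$ is connected to $\partial \Lambda_N$ under the wired FK-Ising measure on $\Lambda_N$ with parameter $p = 1 - e^{-2\beta}$. For $\beta < \beta_c$ this parameter is strictly subcritical for FK-Ising, and the sharpness of the phase transition (in the form proved by Aizenman--Barsky--Fern\'andez and Duminil-Copin--Tassion) supplies constants $C_1(\beta), C_2(\beta) > 0$ so that this connection probability is bounded by $C_1(\beta) e^{-C_2(\beta) N}$. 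For $\beta \geq \beta_c$ the bound is trivial with $C_2(\beta) = 0$ and $C_1(\beta) = 2$.

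Neither step is really hard: the first is a bookkeeping application of Theorem \ref{T:inf}, and the second packages together two standard ingredients (spin-flip symmetry and sharpness of the phase transition). The substantive content of the corollary lies entirely in the first inequality, which reduces an arbitrary RFIM question to the pure Ising case; the exponential decay in the full subcritical regime is inherited from the well-developed theory of the pure model.
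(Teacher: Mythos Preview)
Your proposal is correct and follows the paper's approach: the paper simply states that the first inequality in \eqref{eq:RFIM} is an immediate consequence of Theorem~\ref{T:inf} (your choice of $g$ and $h$ is exactly the special case \eqref{eq:inf0} discussed just before the theorem), and that the second inequality is the classical result of Aizenman--Barsky--Fern\'andez~\cite{ABF87} (with \cite{DT16, DRT19} also cited). Your slightly more explicit route via spin-flip symmetry and the Edwards--Sokal coupling is a standard unpacking of that citation and leads to the same bound.
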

\noindent
The first inequality in \eqref{eq:RFIM} is an immediate consequence of Theorem \ref{T:inf}, while the second inequality is a classic result by Aizenman, Barsky, and Fern\'andez~\cite{ABF87} (see also \cite{DT16, DRT19}).
\medskip

The third corollary of Theorem \ref{T:inf} is the strong spatial mixing property of the Ising model on $\Z^d$, $d\geq 2$, in the entire high temperature regime. Given a finite domain $V\subset \Z^d$ with boundary condition $\tau: \partial V \to \{\pm 1\}$ on the external boundary $\partial V$, external field $h:V\to\R$, coupling constants $J\equiv 1$ and inverse temperature $\beta$, the associated Ising measure $\mu^{\tau,h}_{V, \beta}$ is said to satisfy the weak spatial mixing property if for any $\Delta \subset V$, the influence of the boundary condition on $(\sigma_v)_{v\in \Delta}$ decays exponentially in $d(\Delta, \partial V):=\min_{x\in \Delta, y\in \partial V} \vert x-y\vert_1$ (see \cite[Section 2.3]{M99} for a precise definition). The strong spatial mixing property requires instead exponential decay in $d(\Delta, S)$, where $S$ is the subset of $\partial V$ where the spins in $\tau$ are flipped. More precisely,

\begin{definition}
The Ising measures $\mu^{\cdot, h}_{V, \beta}$ is said to satisfy the strong spatial mixing property in $V$ with constants $C$ and $m$ $($denoted by $SM(V, C, m))$, if  for every $\Delta \subset V$ and $y\in \partial V$,
\begin{equation}\label{eq:mixing}
\sup_{\tau: \partial V \to \{\pm1\}} \Vert \mu^{\tau,h}_{V, \beta}|_\Delta - \mu^{\tau^y, h}_{V, \beta}|_\Delta\Vert_{TV} \leq C e^{-m d(\Delta, y)},
\end{equation}
where $\mu^{\tau, h}_{V, \beta}|_\Delta$ denotes the marginal law of $(\sigma_v)_{v\in \Delta}$ under $\mu^\tau_{V, \beta}$, $\tau^y$ is obtained from $\tau$ by flipping the spin $\tau_y$, and $\Vert \cdot\Vert_{TV}$ denotes total variation distance between measures.
\end{definition}
As a corollary of Theorem \ref{T:inf}, we have the following result.
\begin{corollary}\label{C:mixing}
Let $d\geq 2$. Then for any $\beta \in [0, \beta_c)$, there exist $C, m\in (0,\infty)$ such that the Ising measures $\mu^{\cdot, h}_{V, \beta}$ satisfy $SM(V, C, m)$ for all finite $V\subset \Z^d$ and $h:V\to \R$.
\end{corollary}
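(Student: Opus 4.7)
The plan is to combine Theorem \ref{T:inf} with the FKG monotone coupling for the Ising model and the classical high-temperature exponential decay of the pure Ising two-point function on $\Z^d$ (Aizenman-Barsky-Fern\'andez, as already invoked in Corollary \ref{C:RFIM}). Fix a finite $V\subset\Z^d$, an external field $h:V\to\R$, a set $\Delta\subset V$, a site $y\in\partial V$, and a boundary condition $\tau:\partial V\to\{\pm 1\}$; without loss of generality $\tau_y=+1$, so that $\tau\geq\tau^y$ pointwise on $\partial V$. Since the Ising model with any external field satisfies the FKG lattice condition, $\mu^{\tau,h}_{V,\beta}$ stochastically dominates $\mu^{\tau^y,h}_{V,\beta}$; taking the monotone coupling $(X,Y)$ and using $\{X|_\Delta\neq Y|_\Delta\}\subset\bigcup_{v\in\Delta}\{X_v=+1,Y_v=-1\}$ gives
\[
\|\mu^{\tau,h}_{V,\beta}|_\Delta - \mu^{\tau^y,h}_{V,\beta}|_\Delta\|_{TV}\ \leq\ \tfrac12\sum_{v\in\Delta}\bigl(\langle\sigma_v\rangle^{\tau,h}_{V,\beta} - \langle\sigma_v\rangle^{\tau^y,h}_{V,\beta}\bigr).
\]

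Next I would apply Theorem \ref{T:inf} to bound each summand. Encode the two Ising models on the extended vertex set $V\cup\partial V$ by external fields $g^\pm$ with $g^\pm_v=h_v$ for $v\in V$, $g^\pm_v=\tau_v\cdot\infty$ for $v\in\partial V\setminus\{y\}$, and $g^+_y=+\infty$, $g^-_y=-\infty$. The symmetric decomposition $g=(g^++g^-)/2$, $\delta=(g^+-g^-)/2$ then has $g_y=0$, $\delta_y=+\infty$, and $\delta\equiv 0$ off $y$, so $\min\{|g_v|,\delta_v\}<\infty$ at every vertex. Theorem \ref{T:inf} applied with this $(g,\delta)$ yields
\[
\langle\sigma_v\rangle^{\tau,h}_{V,\beta} - \langle\sigma_v\rangle^{\tau^y,h}_{V,\beta}\ \leq\ \langle\sigma_v\rangle_{\delta} - \langle\sigma_v\rangle_{-\delta},
\]
and the right-hand side is the magnetization difference for the \emph{zero-field} Ising model on $V\cup\partial V$ with only the spin at $y$ fixed to $\pm 1$. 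Spin-flip symmetry turns this into $2\langle\sigma_v\rangle_{\delta}$, and conditioning on $\sigma_y=+1$ in the zero-field free-boundary measure identifies $\langle\sigma_v\rangle_{\delta}$ with the two-point function $\langle\sigma_v\sigma_y\rangle^{\mathrm{free}}_{V\cup\partial V,\beta}$. Enlarging the graph to all of $\Z^d$ (unit couplings, zero field, free boundary) only increases this quantity by the GKS-II inequality, so each summand is at most $2\langle\sigma_v\sigma_y\rangle^{\mathrm{free}}_{\Z^d,\beta}$.

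Finally, Aizenman-Barsky-Fern\'andez provides constants $C(\beta),c(\beta)>0$ such that $\langle\sigma_v\sigma_y\rangle^{\mathrm{free}}_{\Z^d,\beta}\leq C(\beta)e^{-c(\beta)|v-y|_1}$ for every $\beta<\beta_c$. Substituting into the TV bound above and using polynomial volume growth of $\Z^d$ to control the sum over $v\in\Delta$ (only sites with $|v-y|_1\geq d(\Delta,y)$ contribute) yields $SM(V,C,m)$ for any fixed $m<c(\beta)$ and a suitable $C=C(\beta,m,d)$. The whole argument is essentially bookkeeping once Theorem \ref{T:inf} is in hand; the only step I expect to require genuine care is the change of variables $(g^+,g^-)\mapsto(g,\delta)$, which packages the external field $h$ on $V$ together with the boundary condition $\tau$ on $\partial V\setminus\{y\}$ into a single reference field $g$, so that Theorem \ref{T:inf} eliminates both in one stroke and reduces the strong spatial mixing problem to the classical pure Ising two-point function at high temperature.
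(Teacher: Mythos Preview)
Your proof is correct and rests on the same three ingredients as the paper's: Theorem~\ref{T:inf} to strip away both the external field $h$ and the boundary spins on $\partial V\setminus\{y\}$, the GKS inequality to pass from the finite graph $V\cup\partial V$ to all of $\Z^d$, and the Aizenman--Barsky--Fern\'andez exponential decay of the pure two-point function. The packaging of $h$ and $\tau|_{\partial V\setminus\{y\}}$ into a single field $g$ is exactly what the paper does in \eqref{eq:couple1}.

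Where you differ is in the coupling step. The paper introduces an $l^1$-sphere $S$ of radius $R=\lfloor d(\Delta,y)/2\rfloor$ separating $y$ from $\Delta$, reduces to controlling the law of the spins on $S\cap V$, and then couples those spins \emph{sequentially}, invoking Theorem~\ref{T:inf} anew at each step with the previously coupled spins absorbed into the boundary condition; this yields a failure probability $\leq |S\cap V|\,C_1 e^{-C_2 R}$ with $|S\cap V|=O(R^{d-1})$. You instead take the global FKG monotone coupling on $\Delta$ directly, apply a union bound over $v\in\Delta$, and control the resulting sum $\sum_{v\in\Delta}e^{-c|v-y|_1}$ by the tail of the full $\Z^d$ series. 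Your route is shorter and avoids the sphere altogether; it also gives a decay rate arbitrarily close to the true inverse correlation length $c(\beta)$, whereas the paper's choice $R\approx d(\Delta,y)/2$ costs a factor of $2$ in the exponent. The paper's sequential scheme, on the other hand, makes the bound manifestly depend only on the geometry of a separating surface rather than on $\Delta$ itself, which is conceptually nice but not needed for the stated corollary.
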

Weak and strong spatial mixing play a key role in the study of exponential ergodicity of Glauber dynamics for the Ising model (see \cite[Section 2.3]{M99}). \blue{In $d=2$, when there is a constant external field $h\neq 0$ or when $\beta <\beta_c$, strong spatial mixing has been established for all square domains in \cite{MOS94} and for a more general class of domains in 
\cite{SS95}.} Our result for all finite domains when $\beta<\beta_c$ appears to be new. In $d\geq 3$, strong spatial mixing has only been proved for all cubic domains if $\beta$ is sufficiently small or $\beta h$ is sufficiently large, but it is not expected to hold for $\beta$ large and $h$ sufficiently small~\cite{MO94}. Our result proves the conjecture that strong spatial mixing holds in the entire high temperature regime for all finite domains.

In recent years, strong spatial mixing has also played a crucial role in studying cutoff of Glauber dynamics for the Ising model, namely, a sharp $L^1$ transition to equilibrium. Lubetzky and Sly showed in~\cite{LS13, LS14} that the Glauber dynamics for the Ising model exhibits cutoff as long as the Ising measures satisfy strong spatial mixing for all cubic domains, which applies in particular in $d=2$ in the entire high temperature regime. In $d\geq 3$, strong spatial mixing was only known at the time for $\beta$ sufficiently small. So in \cite{LS16}, they developed an alternative framework, called information percolation, to prove cutoff in the entire high temperature regime in $d\geq 3$. We can now deduce this cutoff directly by combining Corollary \ref{C:mixing} with results from \cite{LS13, LS14}.

Lastly, we remark that Theorem \ref{T:inf} provides exactly the type of estimates needed to apply Theorem 1.3 in the recent work \cite{ALG20} to get quantitative bounds on the spectral gap for Glauber dynamics in the Ising model.

\section{Proof of Theorem \ref{T:inf}}

\begin{proof}[Proof of Theorem \ref{T:inf}]
First note that the l.h.s.\ of \eqref{eq:inf}, $\langle \sigma_o\rangle_{g+h} - \langle \sigma_o\rangle_{g-h}$, is continuous in $(g_v)_{v\in V} \in [-\infty, \infty]^V$.
Therefore it suffices to prove \eqref{eq:inf} for all $g: V\to \R$, which we assume from now on. We will prove this by a two-dimensional induction on $(n, m)=(|V|, |V_+|)$, the cardinalities of $V$ and $V_+:=\{v\in V: h_v>0\}$.

\blue{For $n\in\N$ and $0\leq m\leq n$, let ${\rm P}(n,m)$ denote the following claim: 
$$
{\rm P}(n,m): \quad  \mbox{\eqref{eq:inf} holds for all $G$, $g:V\to\R$, and $h:V\to [0,\infty]$ with $(|V|, |V_+|)=(n, m)$.}
$$}
To initiate the induction, note that when $|V_+|=0$, that is, $h\equiv 0$, \eqref{eq:inf} always holds because both sides equal $0$. When $|V|=|V_+|=1$, let $o$ denote the only vertex in $V$ and omit it from the subscript of $g$ and $h$. By the definition of $\mu_{g+h}$, we have
\begin{equation}\label{eq:1spin}
\langle \sigma_o\rangle_{g+h} = \frac{e^{g+h} - e^{-(g+h)}}{e^{g+h} + e^{-(g+h)}} = \tanh (g+h).
\end{equation}
Then \eqref{eq:inf} is equivalent to
\begin{equation}\label{eq:gpmh}
f(g):= \tanh (g+h)-\tanh (g-h) \leq \tanh h -\tanh (-h),
\end{equation}
where the inequality is in fact strict when $g\neq 0$. This holds because $f(g)$ is an even function, and for $g\geq 0$,
$$
f'(g) = \frac{1}{\cosh^2(g+h)} - \frac{1}{\cosh^2(g-h)} = \frac{\cosh(2g-2h) -\cosh(2g+2h)}{2\cosh^2(g+h) \cosh^2(g-h)}\leq 0,
$$
\blue{where the inequality holds since $\cosh(2g-2h) \leq \max\{\cosh 2g, \cos 2h\} \leq \cosh (2g+2h)$.} 

\blue{
We have thus shown that ${\rm P}(n,m)$ holds for all pairs in $\{(n, m): n\in \N, m=0\} \cup \{(1,1)\}$, which proves the base cases for induction.
We then note that to prove Theorem \ref{T:inf}, it only remains to show the following induction step:
\begin{itemize}
\item[\bf ($\dagger$)] Given $(n,m)$ with $n\geq 2$ and $1\leq m\leq n$, if $\{{\rm P}(n,i): 0\leq i\leq m-1\}$ and $\{{\rm P}(n-1,i): 0\leq i\leq n-1\}$ all hold, then
${\rm P}(n,m)$ also holds.
\end{itemize}
Applying this induction step repeatedly will establish ${\rm P}(n,m)$ for all $n\in\N$ and $0\leq m\leq n$. 
}

Let $G$, $g:V\to\R$, and $h:V\to [0,\infty]$ be arbitrary with $|V|=n\geq 2$ and $1\leq |V_+|=m\leq n$. Suppose that $\{{\rm P}(n,i): 0\leq i\leq m-1\}$ and $\{{\rm P}(n-1,i): 0\leq i\leq n-1\}$
all hold. For arbitrary $o\in V$, we will show that \eqref{eq:inf} holds, which will conclude the induction step {\bf ($\dagger$)}.

First consider the case $V_+=\{o\}$, so that $h_v=0$ for all $v\neq o$. We then have
\begin{equation}\label{eq:efield}
\langle \sigma_o\rangle_{g+h} =\tanh (\lambda(g)+g_o+h_o) \quad \mbox{and} \quad \langle \sigma_o\rangle_{g-h} =\tanh (\lambda(g)+g_o-h_o),
\end{equation}
where $\lambda(g)$ is the effective field induced by other spins on $\sigma_o$; more precisely, if we set $\tilde g:=g$ except that $\tilde g_o:=0$, then the marginal distribution of $\sigma_o$ under $\mu_{\tilde g}$ is the same as in a single spin system with external field $\lambda(g)$ (compare with \eqref{eq:1spin}), with \begin{equation}\label{eq:efi}
e^{2\lambda(g)} = \frac{\mu_{\tilde g}(\sigma_o=+1)}{\mu_{\tilde g}(\sigma_o=-1)}=
\frac{ \sum_{\sigma: \sigma_o=+1}\exp\Big\{\sum_{u\sim v} J_{uv} \sigma_u \sigma_v + \sum_{u\in V\backslash\{o\}} g_u \sigma_u\Big\}}{ \sum_{\sigma: \sigma_o=-1} \exp\Big\{\sum_{u\sim v} J_{uv} \sigma_u \sigma_v + \sum_{u\in V\backslash\{o\}} g_u \sigma_u\Big\}}.
\end{equation}
This then implies $ \frac{\mu_{g\pm h}(\sigma_o=+1)}{\mu_{g\pm h}(\sigma_o=-1)}=e^{2(\lambda(g) +g_o\pm h_o)}$ and \eqref{eq:efield}. Note that when $g\equiv 0$, $\lambda(g)=0$ by symmetry. Since we have proved \eqref{eq:inf} for the case $|V|=|V_+|=1$, we have
\begin{equation}
\begin{aligned}
\langle \sigma_o\rangle_{g+h} - \langle\sigma_o\rangle_{g-h} & = \tanh (\lambda(g)+g_o+h_o)-\tanh (\lambda(g)+g_o-h_o) \\
& \leq \tanh (h_o)-\tanh (-h_o) = \langle \sigma_o\rangle_{h} - \langle\sigma_o\rangle_{-h}.
\end{aligned}
\end{equation}
This proves \eqref{eq:inf} for the case $V_+=\{o\}$.

Now consider the case $V_+\neq \{o\}$ and $|V_+|\geq 1$. There must exist $v\in V\backslash\{o\}$ with $h_v>0$. We can perform the decomposition
\begin{equation}
\begin{aligned}\label{eq:sig1}
\langle \sigma_o\rangle_{\pm h} & = \mu_{\pm h}(\sigma_v=1)\langle \sigma_o |\sigma_v=1\rangle_{\pm h}  + \mu_{\pm h}(\sigma_v=-1) \langle \sigma_o |\sigma_v=-1\rangle_{\pm h}, \\
\langle \sigma_o\rangle_{g\pm h} & = \mu_{g\pm h}(\sigma_v=1)\langle \sigma_o |\sigma_v=1\rangle_{g\pm h}  + \mu_{g\pm h}(\sigma_v=-1) \langle \sigma_o |\sigma_v=-1\rangle_{g\pm h}, \\
\end{aligned}
\end{equation}
where $\langle \cdot |\cdot\rangle_{\cdot}$ denotes conditional expectation.

Before giving the actual proof, we first give some heuristics. To be able to apply the induction hypothesis, the key idea is to replace the marginal distribution of $\sigma_v$ under $\mu_h$ by that of a mixture between $\mu_{h}^{(0)}$ and $\mu_{h}^{(\infty)}$, where $\mu_{h}^{(a)}$ denotes the measure with external field $h$, but $h_v$ is reset to the value $a$. More precisely, there exists a unique $\alpha \in [0,1]$ such that
\begin{equation}\label{eq:sig2}
\begin{aligned}
\langle \sigma_v\rangle_h & = \alpha \langle \sigma_v\rangle_h^{(0)} + (1-\alpha) \langle \sigma_v\rangle_h^{(\infty)} = \alpha \langle \sigma_v\rangle_h^{(0)} + (1-\alpha), \\
\langle \sigma_v\rangle_{-h} & = \alpha \langle \sigma_v\rangle_{-h}^{(0)} + (1-\alpha) \langle \sigma_v\rangle_{-h}^{(-\infty)} = \alpha \langle \sigma_v\rangle_{-h}^{(0)} - (1-\alpha),
\end{aligned}
\end{equation}
where $\langle \cdot\rangle_{\pm h}^{(a)}$ denotes expectation with respect to $\mu_{\pm h}^{(a)}$, and we used that $\sigma_v=\pm 1$ when the field at $v$ is $\pm \infty$. Since $\langle \sigma_v\rangle_h = 2\mu_h(\sigma_v=1)-1$, the decomposition in the first identity in \eqref{eq:sig2} remains equivalent if we replace $\langle \sigma_v\rangle_h^{(\cdot)}$ by $\mu_h^{(\cdot)}(\sigma_v=1)$. In particular, the marginal distribution of $\sigma_v$ under $\mu_h$ is a mixture of the law of $\sigma_v$ under $\mu_h^{(0)}$ and $\mu_h^{(\infty)}$, with coefficients $\alpha$ and $1-\alpha$ respectively. Substituting this decomposition into \eqref{eq:sig1} and using the fact that $\langle \sigma_o |\sigma_v=\pm 1\rangle_h$ does not depend on $h_v$, we obtain
\begin{equation}\label{eq:sig3}
\langle \sigma_o\rangle_{\pm h} = \alpha \langle \sigma_o\rangle^{(0)}_{\pm h} + (1-\alpha) \langle \sigma_o\rangle^{(\pm \infty)}_{\pm h}.
\end{equation}

If we can find some $H$ such that the law of $\sigma_v$ under $\mu_{g\pm h}$ can be similarly decomposed as the mixture of the law of $\sigma_v$ under $\mu_{g\pm h}^{(H)}$ (the same $H$ in both $\mu_{g\pm h}^{(H)}$) and $\mu_{g\pm h}^{(\pm \infty)}$ with the same mixture coefficient $\alpha$, then substituting this decomposition into \eqref{eq:sig1} gives
\begin{equation}\label{eq:sig4}
\langle \sigma_o\rangle_{g\pm h}  = \alpha \langle \sigma_o\rangle^{(H)}_{g\pm h} + (1-\alpha) \langle \sigma_o\rangle_{g\pm h}^{(\pm \infty)}.
\end{equation}
The desired bound \eqref{eq:inf} would then follow from
\begin{equation}\label{eq:2ineq}
\begin{aligned}
\langle \sigma_o\rangle^{(H)}_{g+h} - \langle \sigma_o\rangle^{(H)}_{g-h} & \leq \langle \sigma_o\rangle^{(0)}_{h} - \langle \sigma_o\rangle^{(0)}_{-h}, \\
\langle \sigma_o\rangle^{(\infty)}_{g+h} - \langle \sigma_o\rangle^{(-\infty)}_{g-h} & \leq \langle \sigma_o\rangle^{(\infty)}_{h} - \langle \sigma_o\rangle^{(-\infty)}_{-h},
\end{aligned}
\end{equation}
both of which follow from the assumption in {\bf ($\dagger$)} \blue{that $\{{\rm P}(n,i): 0\leq i\leq m-1\}$ and $\{{\rm P}(n-1,i): 0\leq i\leq n-1\}$ all hold,} because in the first case, $|V_+|$ has been reduced by $1$ since the field at $v$ has been set to the common value $H$. In the second case, $\sigma_v$ has the same effect as adding an additional field of $\pm J_{uv}$ to all the neighbours $u\sim v$ if the field at $v$ is $\pm \infty$, which changes $h_u$ to $h_u+J_{uv}$. With this change in the field $h$, we can remove $v$ from the graph and \blue{reduce $|V|$ by $1$}.

Unfortunately, we cannot expect \eqref{eq:sig4} to hold with the same choice of $H$ for both $\langle \sigma_o\rangle_{g\pm h}$. \blue{However, we only need to bound the change of the l.h.s.\ of \eqref{eq:sig4} by the change of its r.h.s.\ as we change the field from $g-h$ to $g+h$. This can be accomplished with the help of the following lemma.}

\begin{lemma}\label{L:ineq}
Assume that $G$, $g:V\to\R$, $h:V\to [0,\infty]$ satisfy $|V|=n\geq 2$, $1\leq |V_+|=m\leq n$, and $h_v>0$ for some $v\in V$.  Suppose that the assumptions in {\bf $(\dagger)$} hold for $(n,m)$. Let $\alpha$ be defined as in \eqref{eq:sig2}. Then for all $c_+, c_-\geq 0$, we can find $H$ such that
\begin{equation}\label{eq:ineq}
c_+ \langle \sigma_v\rangle_{g+ h}- c_- \langle \sigma_v\rangle_{g-h}
\leq  c_+ \Big( \alpha \langle \sigma_v\rangle_{g+ h}^{(H)} + 1-\alpha\Big) - c_- \Big(\alpha \langle \sigma_v\rangle_{g-h}^{(H)} -(1-\alpha)\Big) .
\end{equation}
\end{lemma}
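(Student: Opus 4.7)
The plan is to rewrite \eqref{eq:ineq} as $c_+A(H)\le c_-B(H)$, with
\[
A(H):=\langle\sigma_v\rangle_{g+h}-\alpha\langle\sigma_v\rangle_{g+h}^{(H)}-(1-\alpha),\qquad
B(H):=\langle\sigma_v\rangle_{g-h}-\alpha\langle\sigma_v\rangle_{g-h}^{(H)}+(1-\alpha).
\]
Both $A$ and $B$ are continuous and strictly decreasing in $H\in[-\infty,+\infty]$, with the built-in signs $A(+\infty)\le 0$ and $B(-\infty)\ge 0$. I will aim to exhibit a single $H$ making $A(H)\le 0$ and $B(H)\ge 0$ simultaneously; on such an $H$ we automatically have $c_+A(H)\le 0\le c_-B(H)$ for every $c_+,c_-\ge 0$, so the same $H$ serves all pairs (and the $(c_+,c_-)$-dependence allowed by the statement is only a convenience).

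If $B(+\infty)\ge 0$ take $H=+\infty$, and symmetrically if $A(-\infty)\le 0$ take $H=-\infty$. Otherwise both functions change sign at unique finite points $H_+^{*}$ (for $A$) and $H_-^{*}$ (for $B$), and the task reduces to the comparison $H_+^{*}\le H_-^{*}$. Writing $\langle\sigma_v\rangle_{g\pm h}^{(H)}=\tanh(H+\lambda_\pm)$, where $\lambda_\pm$ is the effective field at $v$ produced by the $g\pm h$ Ising system on $V\setminus\{v\}$, and using
\[
\alpha=\frac{e^{-h_v}\cosh\lambda_0}{\cosh(h_v+\lambda_0)},\qquad 1-\alpha=\frac{e^{\lambda_0}\sinh h_v}{\cosh(h_v+\lambda_0)},\qquad \lambda_0:=\lambda(h),
\]
one can solve $A(H_+^{*})=B(H_-^{*})=0$ in closed form; $H_+^{*}\le H_-^{*}$ then rearranges to the inequality $(\alpha p-(1-\alpha))(\alpha q-(1-\alpha))\le r$ with $p:=e^{2(g_v+h_v+\lambda_+)}$, $q:=e^{-2(g_v-h_v+\lambda_-)}$, $r:=e^{2(\lambda_+-\lambda_-)}$ and the constraint $pq=e^{4h_v}r$. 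Applying AM-GM to the pair $p,q$ (note that only their product, not the individual factors, is independent of $g_v$), together with the further identity $(1-\alpha)/(\alpha e^{2h_v}-1)=e^{2\lambda_0}$, collapses the question to the sufficient condition
\[
\lambda(g+h)-\lambda(g-h)\;\le\;\lambda(h)-\lambda(-h),
\]
which is precisely \eqref{eq:inf} applied to the functional $\lambda(\cdot)$ rather than to a spin expectation, on the reduced graph $V\setminus\{v\}$.

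The main obstacle is closing this last comparison, because $\lambda$ is not itself a spin expectation. My approach is the integral identity
\[
\lambda(h')\;=\;\sum_{u\sim v}\tfrac{J_{uv}}{2}\int_{-1}^{1}\langle\sigma_u\rangle^{V\setminus\{v\}}_{h'+sJ_v}\,ds,
\]
where $J_v$ denotes the field equal to $J_{uv}$ at $u\sim v$ and $0$ otherwise, which writes $\lambda$ as a positive linear combination of Ising spin expectations on $V\setminus\{v\}$. The induction hypothesis ${\rm P}(n-1,\cdot)$ then yields the corresponding spin-level inequality pointwise in $s$; integrating in $s$ and invoking the GHS concavity of $h'\mapsto\langle\sigma_u\rangle^{V\setminus\{v\}}_{h'}$ on the nonnegative-field cone should absorb the discrepancy between $\int_{-1}^{1}\langle\sigma_u\rangle_{h+sJ_v}^{V\setminus\{v\}}\,ds$ and $2\langle\sigma_u\rangle_{h}^{V\setminus\{v\}}$ and close the argument. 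Matching these factors cleanly is the delicate point on which I expect most of the work to focus; in a parameter regime where the AM-GM reduction is too lossy, the fallback is to work with the full inequality in $p$ and $q$ and to let $H$ depend on $(c_+,c_-)$, as permitted by the statement.
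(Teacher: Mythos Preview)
Your overall strategy — rewrite \eqref{eq:ineq} as $c_+A(H)\le c_-B(H)$ and look for a single $H$ with $A(H)\le 0\le B(H)$ — is sound, and in fact the paper does exactly this (in different variables): after passing to $a=\tanh x$, $b=\tanh y$, $c=\tanh h_v$, $d=\tanh H$ it finds one $d$ (independent of $\theta$) for which both the $\sin^2\theta$-coefficient and the $\cos^2\theta$-coefficient inequalities hold separately. So the reduction to $H_+^*\le H_-^*$ is fine.

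The gap is in the last two steps. First, your AM--GM reduction to the effective-field inequality
\[
\lambda(g+h)-\lambda(g-h)\ \le\ \lambda(h)-\lambda(-h)=2\lambda_0
\]
is strictly stronger than what the induction hypothesis provides. The hypothesis (applied with field at $v$ reset to a common value) only gives the \emph{$\tanh$-level} constraint $\tanh x-\tanh y\le 2\tanh z$, i.e.\ $\tanh(\lambda_++g_v)-\tanh(\lambda_-+g_v)\le 2\tanh\lambda_0$, and the implication from this to $\lambda_+-\lambda_-\le 2\lambda_0$ fails in general (take $\lambda_+$ large, $\lambda_-=0$, $\lambda_0$ moderate). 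Second, and more fatally, your proposed proof of the $\lambda$-inequality via the integral identity cannot close: applying ${\rm P}(n-1,\cdot)$ pointwise in $s$ with $g'=g+sJ_v$, $h'=h|_{V\setminus\{v\}}$ gives
\[
\langle\sigma_u\rangle^{V\setminus\{v\}}_{g+h+sJ_v}-\langle\sigma_u\rangle^{V\setminus\{v\}}_{g-h+sJ_v}\ \le\ \langle\sigma_u\rangle^{V\setminus\{v\}}_{h}-\langle\sigma_u\rangle^{V\setminus\{v\}}_{-h},
\]
whose right side is the $s=0$ value of the integrand defining $\lambda(h)-\lambda(-h)$, not the integrand at $s$. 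To finish you would need $\int_{-1}^1[\langle\sigma_u\rangle_{h+sJ_v}-\langle\sigma_u\rangle_{-h+sJ_v}]\,ds\ge 2[\langle\sigma_u\rangle_h-\langle\sigma_u\rangle_{-h}]$, but both the induction hypothesis (applied now with $g'=sJ_v$) and GHS/Jensen push this the \emph{wrong} way: they give $\le$, not $\ge$. GHS concavity (valid only on the nonnegative-field cone anyway, which $h+sJ_v$ leaves for $s<0$) yields $\int\langle\sigma_u\rangle_{h+sJ_v}\,ds\le 2\langle\sigma_u\rangle_h$, again the opposite direction.

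The paper sidesteps all of this: it uses the induction hypothesis only to obtain the weak constraint $\tanh x-\tanh y\le 2\tanh z$, then sets $\alpha$ to its worst-case value $\alpha=(1-c)/(1+\tfrac12(a-b)c)$ and reduces $H_+^*\le H_-^*$ to a purely elementary inequality in $(a,b,c,d)$ — essentially $\frac{1+ad}{1-d}+\frac{1+bd}{1+d}\ge 2$ for the right $d\in[0,1]$ — which is checked in two lines. I would recommend abandoning the effective-field detour and working directly with the $\tanh$-transformed variables under the constraint $a-b\le 2\tanh z$.
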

We now show how Lemma \ref{L:ineq} can be used to prove \eqref{eq:inf} and conclude the induction step {\bf ($\dagger$)}. Using \eqref{eq:sig1} and $\langle \sigma_v\rangle_{g\pm h} = 2 \mu_{g\pm h}(\sigma_v=1)-1 = 1-2 \mu_{g\pm h}(\sigma_v=-1)$, we can write
\begin{align}
& \langle \sigma_o\rangle_{g\pm h} \label{eq:sig5}\\
=\ &  \frac{1}{2}\Big(\langle \sigma_o |\sigma_v=1\rangle_{g\pm h} + \langle \sigma_o |\sigma_v=-1\rangle_{g\pm h}\Big) \notag \\
&\quad  + \Big(\mu_{g\pm h}(\sigma_v=1)-\frac{1}{2}\Big) \langle \sigma_o |\sigma_v=1\rangle_{g\pm h}  + \Big(\mu_{g\pm h}(\sigma_v=-1) -\frac{1}{2}\Big) \langle \sigma_o |\sigma_v=-1\rangle_{g\pm h} \notag \\
=\ & \frac{1}{2}\Big(\langle \sigma_o |\sigma_v=1\rangle_{g\pm h} + \langle \sigma_o |\sigma_v=-1\rangle_{g\pm h}\Big) +\frac{\langle \sigma_v\rangle_{g\pm h}}{2}\big(\langle \sigma_o |\sigma_v=1\rangle_{g\pm h} - \langle \sigma_o |\sigma_v=-1\rangle_{g\pm h}\big).  \notag
\end{align}
Let $c_\pm := \frac{1}{2}\big(\langle \sigma_o |\sigma_v=1\rangle_{g\pm h} - \langle \sigma_o |\sigma_v=-1\rangle_{g\pm h}\big)$, which are both non-negative by the FKG inequality for the Ising model (see \cite[Section 3.6.2]{FV17}). We then have
\begin{align}
& \langle \sigma_o\rangle_{g+ h} - \langle \sigma_o\rangle_{g -h} \notag \\
=\ &  \frac{1}{2}\Big(\langle \sigma_o |\sigma_v=1\rangle_{g+ h} + \langle \sigma_o |\sigma_v=-1\rangle_{g+ h}\Big) -  \frac{1}{2}\Big(\langle \sigma_o |\sigma_v=1\rangle_{g- h} + \langle \sigma_o |\sigma_v=-1\rangle_{g- h}\Big) \notag \\
& \qquad + \Big(c_+ \langle \sigma_v\rangle_{g+h} - c_- \langle \sigma_v\rangle_{g-h}\Big), \label{eq:sig6}
\end{align}
where we can apply Lemma \ref{L:ineq} to bound
\begin{equation}\label{eq:sig7}
\begin{aligned}
& c_+ \langle \sigma_v\rangle_{g+h} - c_- \langle \sigma_v\rangle_{g-h} \\
\leq\ &  c_+ \Big(\alpha \langle \sigma_v\rangle_{g+h}^{(H)} +(1-\alpha)\langle \sigma_v\rangle_{g+h}^{(\infty)}\Big) -c_-\Big(\alpha \langle \sigma_v\rangle_{g-h}^{(H)} +(1-\alpha)\langle \sigma_v\rangle_{g-h}^{(-\infty)}\Big),
\end{aligned}
\end{equation}
which replaces $\langle \sigma_v\rangle_{g+h}$ by a mixture of $\langle \sigma_v\rangle^{(H)}_{g+h}$ and $\langle \sigma_v\rangle^{(\infty)}_{g+h}$, and similarly for $\langle \sigma_v\rangle_{g-h}$. Substituting this bound into \eqref{eq:sig6} gives
\begin{align}
& \langle \sigma_o\rangle_{g+ h} - \langle \sigma_o\rangle_{g -h} \notag \\
\leq \ &  \frac{1}{2}\Big(\langle \sigma_o |\sigma_v=1\rangle_{g+ h} + \langle \sigma_o |\sigma_v=-1\rangle_{g+ h}\Big) -  \frac{1}{2}\Big(\langle \sigma_o |\sigma_v=1\rangle_{g- h} + \langle \sigma_o |\sigma_v=-1\rangle_{g- h}\Big) \notag \\
& \quad + c_+ \Big(\alpha \langle \sigma_v\rangle_{g+h}^{(H)} +(1-\alpha)\langle \sigma_v\rangle_{g+h}^{(\infty)}\Big) -c_-\Big(\alpha \langle \sigma_v\rangle_{g-h}^{(H)} +(1-\alpha)\langle \sigma_v\rangle_{g-h}^{(-\infty)}\Big) \notag \\
=\ & \alpha\Bigg\{  \frac{1}{2}\Big(\langle \sigma_o |\sigma_v=1\rangle_{g+ h} + \langle \sigma_o |\sigma_v=-1\rangle_{g+ h}\Big) -  \frac{1}{2}\Big(\langle \sigma_o |\sigma_v=1\rangle_{g- h} + \langle \sigma_o |\sigma_v=-1\rangle_{g- h}\Big) \notag \\
& \hspace{7cm} + c_+ \langle \sigma_v\rangle_{g+h}^{(H)} - c_- \langle \sigma_v\rangle_{g-h}^{(H)} \Bigg\} \label{eq:cpm1}\\
+ &  (1-\alpha)\Bigg\{  \frac{1}{2}\Big(\langle \sigma_o |\sigma_v=1\rangle_{g+ h} + \langle \sigma_o |\sigma_v=-1\rangle_{g+ h}\Big) -  \frac{1}{2}\Big(\langle \sigma_o |\sigma_v=1\rangle_{g- h} + \langle \sigma_o |\sigma_v=-1\rangle_{g- h}\Big) \notag \\
& \hspace{7cm} + c_+ \langle \sigma_v\rangle_{g+h}^{(\infty)} - c_- \langle \sigma_v\rangle_{g-h}^{(-\infty)} \Bigg\}. \label{eq:cpm2}
\end{align}
Note that $\langle \sigma_o |\sigma_v=\pm 1\rangle_{g\pm h}$ and $c_\pm$ do not depend on the external field at $v$. Therefore we can apply \eqref{eq:sig6}
to \eqref{eq:cpm1} (with $g_v=H$ and $h_v=0$) and \eqref{eq:cpm2} (with $H_v=\infty$) to rewrite the bound as
\begin{equation}
\begin{aligned}
\langle \sigma_o\rangle_{g+ h} - \langle \sigma_o\rangle_{g -h} & \leq \alpha \big(\langle \sigma_o\rangle_{g+h}^{(H)} - \langle \sigma_o\rangle_{g-h}^{(H)}\big) +(1-\alpha) \big(\langle \sigma_o\rangle_{g+h}^{(\infty)} - \langle \sigma_o\rangle_{g-h}^{(-\infty)}\big) \\
& \leq \alpha \big(\langle \sigma_o\rangle_{h}^{(0)} - \langle \sigma_o\rangle_{-h}^{(0)}\big) +(1-\alpha) \big(\langle \sigma_o\rangle_{h}^{(\infty)} - \langle \sigma_o\rangle_{-h}^{(-\infty)}\big) \\
& = \langle \sigma_o\rangle_h - \langle\sigma_o\rangle_{-h},
\end{aligned}
\end{equation}
where we applied \eqref{eq:2ineq} in the second inequality and \eqref{eq:sig3} in the equality. This concludes the proof of the induction step {\bf ($\dagger$)}. It only remains to prove Lemma \ref{L:ineq}, which will be carried out below.
\end{proof}
\medskip

\begin{proof}[Proof of Lemma \ref{L:ineq}] By scaling, it suffices to consider 
\begin{equation}\label{eq:ctheta}
c_+ = \sin^2\! \theta \qquad \mbox{and} \qquad c_-=\cos^2\!\theta \qquad \quad \mbox{for } \theta \in [0, \pi/2].
\end{equation}
Consider the measures $\mu_h^{(0)}$, $\mu_{g+h}^{(g_v)}$ and $\mu_{g-h}^{(g_v)}$, and let $z, x,y$ be respectively the effective field on $\sigma_v$ under these measures (compare with \eqref{eq:efield}). Namely,
\begin{equation}
\tanh z = \langle \sigma_v\rangle_h^{(0)}, \qquad  \tanh x = \langle \sigma_v\rangle_{g+h}^{(g_v)},  \qquad \tanh y = \langle \sigma_v\rangle_{g-h}^{(g_v)}.
\end{equation}
Note that $x,y\in \R$ because $g:V\to\R$, and $\tanh x \geq \tanh y$, i.e.,  $x\geq y$.

By the definition of $\alpha$ in \eqref{eq:sig2}, we have
$$
\tanh (z+h_v) = \alpha \tanh z + 1-\alpha
$$
Therefore
\begin{equation}
\alpha = \frac{1-\tanh (z+h_v)}{1-\tanh z} = \frac{1- \frac{\tanh z +\tanh h_v}{1+\tanh z\tanh h_v}}{1-\tanh z} = \frac{1-\tanh h_v}{1+\tanh z\tanh h_v},
\end{equation}
where we used $\tanh (a+b)= \frac{\tanh a +\tanh b}{1+\tanh a\tanh b}$.

Also observe that, by the induction hypothesis (i.e., the assumption in {\bf ($\dagger$)}), we have
\begin{equation}
\tanh x-\tanh y =  \langle \sigma_v\rangle_{g+h}^{(g_v)} -  \langle \sigma_v\rangle_{g-h}^{(g_v)} \leq \langle \sigma_v\rangle_{h}^{(0)} -  \langle \sigma_v\rangle_{-h}^{(0)} = 2\tanh z.
\end{equation}
The inequality \eqref{eq:ineq} can then be reformulated as showing that, under the constraints $x\geq y$ and $\tanh x-\tanh y \leq 2\tanh z$, we always have
\begin{equation}\label{eq:ineq1}
F(\theta, x,y,h_v):=\sin^2\!\theta\tanh (x+h_v)- \cos^2\!\theta \tanh (y-h_v) + \alpha (1-M_\theta) - 1 \leq 0,
\end{equation}
where
\begin{align}
M_\theta & := \sup_H \Big(\sin^2\!\theta\, \langle \sigma_v\rangle_{g+ h}^{(H)} - \cos^2\!\theta\, \langle \sigma_v\rangle_{g-h}^{(H)}\Big)
= \sup_H \Big(\sin^2\!\theta\, \langle \sigma_v\rangle_{g+ h}^{(g_v+H)} - \cos^2\!\theta\, \langle \sigma_v\rangle_{g-h}^{(g_v+H)}\Big) \notag \\
& = \sup_H \Big(\sin^2\!\theta\, \tanh (x+H) - \cos^2\!\theta\, \tanh(y+H)\Big). \label{eq:Mbeta}
\end{align}
Note that $1-M_\theta\geq 0$ and $\alpha$ is monotonically decreasing in $z$. To prove \eqref{eq:ineq1}, it suffices to consider the worst case scenario where $z$ takes its minimal value with $\tanh z=\frac{\tanh x-\tanh y}{2}$. In the definition of $F$ in \eqref{eq:ineq1}, we can therefore set
\begin{equation}\label{eq:alpha}
\alpha = \frac{1-\tanh h_v}{1+\frac{1}{2}(\tanh x-\tanh y)\tanh h_v} \in [0,1].
\end{equation}

To simplify notation, we make the following change of variables:
\begin{equation}\label{eq:abc}
a:= \tanh x, \qquad b:=\tanh y, \qquad c:=\tanh h_v,
\end{equation}
where $-1<b\leq a<1$ and $c\in [0,1]$. We can then rewrite $F$ in \eqref{eq:ineq1} as
\begin{equation}\label{eq:ineq2}
\begin{aligned}
F(\theta, a,b,c) &:= \sin^2\!\theta\, \frac{a+c}{1+ac} - \cos^2\!\theta\,\frac{b-c}{1-bc} + \frac{1-c}{1+\frac{1}{2}(a-b)c} (1-M_\theta) -1 \\
& = (1-c) \Big\{\frac{1-M_\theta}{1+\frac{1}{2}(a-b)c} - \Big(\sin^2\!\theta\, \frac{1-a}{1+ac} + \cos^2\!\theta\,\frac{1+b}{1-bc} \Big)\Big\},
\end{aligned}
\end{equation}
where we simplified the expression, using $1=\sin^2\!\theta +\cos^2\!\theta$. Note that $F=0$ when $c=1$. To show $F\leq 0$ for $c\in [0,1)$, it then suffices to show that
\begin{equation}\label{eq:ineq3}
\frac{1-M_\theta}{1+\frac{1}{2}(a-b)c} \leq  \sin^2\!\theta\, \frac{1-a}{1+ac} + \cos^2\!\theta\,\frac{1+b}{1-bc}.
\end{equation}
Denoting $d:=\tanh H$ and using $\tanh (x+H) = \frac{\tanh x +\tanh H}{1+\tanh x\tanh H}=\frac{a+d}{1+ad}$, we note that
\begin{equation}\label{eq:Mbeta2}
\begin{aligned}
1-M_\theta & = \inf_{d\in [-1, 1]} \Big(1-\sin^2\!\theta\, \frac{a+d}{1+ad} +\cos^2\!\theta\, \frac{b+d}{1+bd} \Big) \\
& = \inf_{d\in [-1, 1]} \Big(\sin^2\!\theta \, \frac{(1-a)(1-d)}{1+ad} +\cos^2\!\theta \, \frac{(1+b)(1+d)}{1+bd} \Big).
\end{aligned}
\end{equation}
Substituting this into \eqref{eq:ineq3}, multiplying both sides by $1+\frac{1}{2}(a-b)c>0$, and comparing the coefficients of $\sin^2\!\theta$ and $\cos^2\!\theta$, we note that \eqref{eq:ineq3}, and hence Lemma \ref{L:ineq}, would follow if for all $-1<b\leq a<1$, $c\in [0,1)$ and $\theta \in [0, \pi/2]$, we can find $d\in [-1,1]$ such that
\begin{equation}
\begin{aligned}
 \frac{(1-a)(1-d)}{1+ad} & \leq \Big(1+ \frac{1}{2} (a-b)c\Big) \frac{1-a}{1+ac},  \\
\frac{(1+b)(1+d)}{1+bd} & \leq \Big(1+ \frac{1}{2} (a-b)c\Big)  \frac{1+b}{1-bc},
\end{aligned}
\end{equation}
which is further equivalent to
\begin{equation}
\begin{aligned}
\frac{1+ac}{1+ \frac{1}{2} (a-b)c} & \leq \frac{1+ad}{1-d}, \\
\frac{1-bc}{1+ \frac{1}{2} (a-b)c} & \leq \frac{1+bd}{1+d}.
\end{aligned}
\end{equation}
Note that the two l.h.s.\ sum up to $2$. Also note that the two inequalities are invariant under the change of variables $(a,b, d)\to (-b,-a, -d)$, so we may assume w.l.o.g.\ that  $a+b\geq 0$, which implies $a\geq 0$ and the l.h.s.\ of the first inequality is greater than or equal to $1$. Note that as $d$ increases from $0$ to $1$, the r.h.s.\ of the first inequality increases from $1$ to $\infty$, and hence there exists $d\in [0,1]$ such that the first inequality becomes equality. The second inequality then follows if we show that for all $d\in [0,1]$, the sum of the two r.h.s.\ is greater than or equal to $2$. This is verified by the following calculation:
\begin{align*}
\frac{1+ad}{1-d} + \frac{1+bd}{1+d}  \geq 2 & \quad \Longleftrightarrow \quad  2+(a+b)d + (a-b)d^2 \geq 2(1-d^2) \\
& \quad \Longleftrightarrow \quad  (a+b)d + (2+ a-b)d^2 \geq 0,
\end{align*}
which holds when $a+b\geq 0$ and $d\in [0,1]$.
\end{proof}

\section{Proof of Corollaries \ref{C:corr} and \ref{C:mixing}}
\begin{proof}[Proof of Corollary \ref{C:corr}]
\blue{This follows from \eqref{eq:inf} by the observation that 
\begin{align*}
\langle \sigma_u \sigma_v\rangle_g -\langle \sigma_u\rangle_g \langle \sigma_v\rangle_g = \frac{\partial \langle \sigma_u\rangle_g}{\partial g_v} & =\lim_{a\downarrow 0}\frac{\langle \sigma_u\rangle_{g+a \delta_v}-\langle \sigma_u\rangle_{g-a\delta_v}}{2a} \\
& \leq \lim_{a\downarrow 0}\frac{\langle \sigma_u\rangle_{a \delta_v}-\langle \sigma_u\rangle_{-a\delta_v}}{2a} = \frac{\partial \langle \sigma_u\rangle_0}{\partial g_v}\Big|_{g_v=0}=\langle \sigma_u \sigma_v\rangle_0.
\end{align*}
}

We also give an alternative proof as follows. First we write the l.h.s.\ of \eqref{eq:corr} as
\begin{equation}\label{eq:siguv}
\begin{aligned}
& \langle (\sigma_u-\langle \sigma_u\rangle_g) \sigma_v\rangle_g \\
=\ &  \mu_g(\sigma_v=1)  \langle \sigma_u-\langle \sigma_u\rangle_g |\sigma_v=1\rangle_g - \mu_g(\sigma_v=-1)  \langle \sigma_u-\langle \sigma_u\rangle_g |\sigma_v=-1\rangle_g,
\end{aligned}
\end{equation}
where $\langle \cdot|\sigma_v=\pm 1\rangle_g$ denotes conditional expectation conditioned on $\sigma_v=\pm 1$. Note that
\begin{align*}
& \langle \sigma_u-\langle \sigma_u\rangle_g |\sigma_v=1\rangle_g  \\
=\ &  \langle \sigma_u |\sigma_v=1\rangle_g -\Big(\mu_g(\sigma_v=1) \langle \sigma_u|\sigma_v=1\rangle_g + \mu_g(\sigma_v=-1) \langle \sigma_u|\sigma_v=-1\rangle_g\Big) \\
=\ & \mu_g(\sigma_v=-1) \Big(\langle \sigma_u |\sigma_v=1\rangle_g  - \langle \sigma_u |\sigma_v=-1\rangle_g   \Big).
\end{align*}
Similarly, we find
\begin{align*}
\langle \sigma_u-\langle \sigma_u\rangle_g |\sigma_v=-1\rangle_g = -\mu_g(\sigma_v=1) \Big(\langle \sigma_u |\sigma_v=1\rangle_g  - \langle \sigma_u |\sigma_v=-1\rangle_g   \Big).
\end{align*}
Substituting these two identities into \eqref{eq:siguv} then gives
\begin{align}\label{eq:siguv2}
\langle \sigma_u \sigma_v\rangle_g  - \langle \sigma_u\rangle_g \langle \sigma_v\rangle_g& = 2 \mu_g(\sigma_v=1) \mu_g(\sigma_v=-1) \Big(\langle \sigma_u |\sigma_v=1\rangle_g  - \langle \sigma_u |\sigma_v=-1\rangle_g   \Big).
\end{align}
Note that $\mu_g(\sigma_v=1) \mu_g(\sigma_v=-1) \leq 1/4 = \mu_0(\sigma_v=1) \mu_0(\sigma_v=-1)$, while
$$
\langle \sigma_u |\sigma_v=1\rangle_g  - \langle \sigma_u |\sigma_v=-1\rangle_g  \leq \langle \sigma_u |\sigma_v=1\rangle_0  - \langle \sigma_u |\sigma_v=-1\rangle_0
$$
by Theorem \ref{T:inf} with $h=\infty$ at $v$ and $h=0$ elsewhere. These two bounds then imply \eqref{eq:corr} since \eqref{eq:siguv2} also holds when $g\equiv 0$, where $\langle \sigma_u\rangle_0 =\langle \sigma_v\rangle_0=0$ by symmetry.
\end{proof}
\medskip

\begin{proof}[Proof of Corollary \ref{C:mixing}] Fix a finite set $V\subset \Z^d$ for some $d\geq 2$, with external boundary $\partial V$. Let $\beta<\beta_c$, and choose any $h: V\to \R$ and boundary condition $\tau: \partial V \to \{\pm 1\}$. Let $y\in \partial V$ be arbitrary, and $\tau^y$ be obtained from $\tau$ by flipping the spin $\tau_y$. Let $\Delta \subset V$ be arbitrary.

Let $R:= \lfloor d(y, \Delta)/2\rfloor$, and let $S:=\{ x\in \Z^d: |x-y|_1=R\}$, the $l^1$ sphere of radius $R$ centered at $y$, which separates $y$ from $\Delta$. We may assume $R$ is sufficiently large, otherwise \eqref{eq:mixing} holds by choosing a large prefactor $C$.  First note that
$$
\Vert \mu^{\tau,h}_{V, \beta}|_\Delta - \mu^{\tau^y, h}_{V, \beta}|_\Delta\Vert_{TV} \leq \Vert \mu^{\tau,h}_{V, \beta}|_{S\cap V} - \mu^{\tau^y, h}_{V, \beta}|_{S\cap V}\Vert_{TV},
$$
because $\Delta$ is contained in a subset of $V$ with boundary spins $(\tau_v)_{|v-y|_1\geq R}$ and $(\sigma_v)_{v\in S\cap V}$, and conditioned on  $(\sigma_v)_{v\in S\cap V}$, the law of $(\sigma_v)_{v\in \Delta}$ is the same under both $\mu^{\tau,h}_{V, \beta}$ and $\mu^{\tau^y,h}_{V, \beta}$. Therefore it suffices to prove \eqref{eq:mixing} with $S\cap V$ in place of $\Delta$.

Label the vertices in $S\cap V$ by $v_1, \ldots, v_n$, with $n:=|S\cap V|$. We proceed by successively coupling the spins $\sigma_{v_1}, \ldots, \sigma_{v_n}$ under the two measures $\mu^{\tau,h}_{V, \beta}$ and $\mu^{\tau^y,h}_{V, \beta}$. By Theorem \ref{T:inf},
\begin{equation}\label{eq:couple1}
\begin{aligned}
\big|\langle \sigma_{v_1} \rangle^{\tau, h}_{V, \beta} - \langle \sigma_{v_1} \rangle^{\tau^y, h}_{V, \beta} \big| \leq \langle \sigma_{v_1} \rangle^{y+}_{V, \beta} - \langle \sigma_{v_1} \rangle^{y-}_{V, \beta} & = 2\langle \sigma_y \sigma_{v_1}\rangle_{V\cup \{y\}, \beta} \\
& \leq 2\langle \sigma_y \sigma_{v_1}\rangle_{\Z^d, \beta} \leq C_1 e^{-C_2 R},
\end{aligned}
\end{equation}
where $\langle \cdot \rangle^{y\pm}_{V, \beta}$ denotes expectation w.r.t.\ the Ising measure with zero external field and free boundary condition, except for a single boundary spin at $y\in \partial V$ with value $\pm 1$; $\langle \cdot \rangle_{\Lambda, \beta}$ corresponds to the free boundary condition in a domain $\Lambda$ with zero external field; the second inequality in \eqref{eq:couple1} is a consequence of the GKS inequality \cite[Section 3.6.1]{FV17} since the coupling constants $J$ over the edges connecting $V\cup \{y\}$ and $(V\cup \{y\})^c$ are increased from $0$ to $1$; the last inequality in \eqref{eq:couple1} holds because $\beta<\beta_c$~\cite{ABF87}. Since $\langle \sigma_v\rangle = 2 \mu(\sigma_v=1)-1$, \eqref{eq:couple1} implies
\begin{equation}\label{eq:couple2}
\big|\mu^{\tau, h}_{V, \beta}(\sigma_{v_1}=1)  -\mu^{\tau^y, h}_{V, \beta}(\sigma_{v_1}=1)\big| \leq \frac{C_1}{2} e^{-C_2 R}.
\end{equation}
We can therefore construct a coupling $(\sigma_{v_1}, \sigma^y_{v_1})$ such that the marginal laws of the two components equal that of the spin at $v_1$
under $\mu^{\tau, h}_{V, \beta}$ and $\mu^{\tau^y, h}_{V, \beta}$, respectively, while $\P(\sigma_{v_1}\neq \sigma^y_{v_1})\leq C_1 e^{-C_2 R}$.

Conditioned on $(\sigma_{v_1}, \sigma^y_{v_1})$ with $\sigma_{v_1}=\sigma^y_{v_1}=:\tau_{v_1}$, we can then couple $(\sigma_{v_2}, \sigma^y_{v_2})$ such that the marginal laws of the two components equal that of the spin at $v_2$, conditioned on the spin at $v_1$ being equal to $\tau_{v_1}$ and under $\mu^{\tau, h}_{V, \beta}$ and $\mu^{\tau^y, h}_{V, \beta}$, respectively. This conditioning effectively imposes an additional boundary condition $\tau_{v_1}$ at $v_1$. But the new boundary conditions $\{\tau, \tau_{v_1}\}$ and $\{\tau^y, \tau_{v_1}\}$ still differ only at $y$. So the calculations leading to \eqref{eq:couple2} remains valid, which implies that we can define a coupling of $(\sigma_{v_2}, \sigma^y_{v_2})$ such that
$$
\P(\sigma_{v_2}\neq \sigma^y_{v_2} | (\sigma_{v_1}, \sigma^y_{v_1}))\cdot 1_{\{\sigma_{v_1}=\sigma^y_{v_1}\}} \leq C_1e^{-C_2R}.
$$
We can then iterate this procedure to obtain a coupling between $(\sigma_v)_{v\in S\cap V}$ and $(\sigma^y_v)_{v\in S\cap V}$, whose laws equal that of the spins in $S\cap V$ under $\mu^{\tau, h}_{V, \beta}$ and $\mu^{\tau^y, h}_{V, \beta}$, respectively, while
$$
\P((\sigma_v)_{v\in S\cap V}= (\sigma^y_v)_{v\in S\cap V}) \geq (1-C_1 e^{-C_2 R})^{|S\cap V|} \geq 1-C_1 |S\cap V| e^{-C_2R}.
$$
Since $|S\cap V|\leq c R^{d-1}= c e^{(d-1)\log R}$, this implies \eqref{eq:mixing} with $S\cap V$ in place of $\Delta$.
\end{proof}

\appendix

\section{Some Counterexamples}\label{S:example} 
In this section, we give the counterexamples mentioned in Remarks \ref{rem1} and \ref{rmk1.4}.

\blue{
\begin{proof}[Counterexample for Remark \ref{rem1}]
Consider $V=\{-2,-1,0,1,2\}$ with edges between neighbouring integers and $J_e=1$ for every edge $e$. Let $g_{-2}=g_{-1}=-2$ and $g_0=0$. It is possible to choose $g_1<2<g_2$ such that the effective field on $\sigma_0$ induced by spins to its left $($cf.~\eqref{eq:efi} below$)$ exactly cancels out the field induced by spins to its right.  If we choose $h_i= 1_{\{i=0\}}$, then $\langle \sigma_0\rangle_{g+h}-\langle\sigma_0\rangle_{g-h}=\langle \sigma_0\rangle_{h}-\langle\sigma_0\rangle_{-h}$ already achieves the maximum. Changing $g$ to $\lambda g$, say for $\lambda \in (0,1)$, in general breaks the balance between the effective fields induced on $\sigma_0$ by spins to its left and right, which leads to strictly smaller values of $\langle \sigma_0\rangle_{\lambda g+h}-\langle\sigma_0\rangle_{\lambda g-h}$ $($see \eqref{eq:gpmh}$)$.
\end{proof}
}

\blue{
\begin{proof}[Counterexample for Remark \ref{rmk1.4}]
Consider a tree with root $u$ and three leaves $v, a, b$. Assume that $J_{ua}\in (0,1)$ and
$J_{uv}=J_{ub}=1$. Consider $\tilde g$ with $\tilde g_u= \tilde g_v=0$ and $\tilde g_b=1$. Then we can find $\tilde g_a<-1$ such that the effective field on $\sigma_u$
induced by $\sigma_a$ $($cf.~\eqref{eq:efi}$)$ exactly cancels out the field induced by $\sigma_b$, which implies that 
equality holds in \eqref{eq:corr} with $g=\tilde g$. If we replace $g=\lambda \tilde g$, then because the effective fields induced by $\sigma_a$ and $\sigma_b$ 
on $\sigma_u$ are distinct non-linear functions of $\lambda$, we can find $\lambda_0\in (0,1)$ such that $\sigma_a$ and $\sigma_b$ together induce a non-zero effective field on $\sigma_u$. Under the external field $\lambda_0 \tilde g$, inequality in \eqref{eq:corr} can be seen to be strict. This implies that the l.h.s.\ of \eqref{eq:corr} with $g=\lambda \tilde g$ is not monotonically decreasing in $\lambda\geq 0$. We can also construct an example with $J\equiv 1$ by inserting a vertex $\tilde a$ between $u$ and $a$. This has the same effect as having an effective coupling $J_{ua}\in (0,1)$ between $u$ and $a$ with $e^{2J_{ua}}= \cosh 2,$\footnote[1]{The effective coupling constant $J_{ua}$ can be identified by first setting $g \equiv 0$ and then computing the ratio of the Gibbs weights associated to the four sets
of configurations with $(\sigma_u, \sigma_a)=(\pm1,\pm1)$. See \eqref{eq:efi} for a similar computation of the effective field.} which is just the example we already constructed.
\end{proof}
}

\section*{Acknowledgements}
J.~Ding wishes to thank Yuval Peres for interesting discussions during his Ph.D.\ studies concerning the validity of Theorem 1.1 and pointing out that it is connected to a conjecture of Shlosman's, and he wishes to thank David Gamarnik for an interesting discussion on the strong spatial mixing property. We thank Ronen Eldan, Trishen Gunaratnam,
Kuikui Liu, Elchanan Mossel, Charles M.~Newman, Yuval Peres, Akira Sakai, Barry Simon, Rob van den Berg, and the referees for helpful comments on the paper. J.~Ding is partially supported by NSF grant DMS-1757479 and DMS-1953848. Much of the work was carried out when J.~Ding was a faculty member of the University of Pennsylvania. J. Song is partially supported by Shandong University grant 11140089963041 and National Natural Science Foundation of China grant 12071256. R. Sun is supported by NUS grant R-146-000-288-114.

\end{document}